
\documentclass[twocolumn,letterpaper]{IEEEtran} 



\IEEEoverridecommandlockouts                              



\usepackage{cite}
\usepackage{amsfonts}
\usepackage{amssymb}
\usepackage{amsmath}
\usepackage{graphicx}


\usepackage{stdmath_noieee}
\usepackage{qi}

\graphicspath{{./graphics_included/}}

\hyphenation{op-tical net-works semi-conduc-tor}





\newcounter{edremcounter}
\setlength\marginparwidth{30pt}




\def\torig{\tilde{t}}
\def\porig{\tilde{p}}
\def\tvar{t}




\def\H{H}
\def\h{h}
\def\NN{\mathcal{N}}

\def\onetom{1,\ldots ,m}
\def\oneton{1,\ldots ,n}
\def\onetony{1,\ldots ,n_y}
\def\onetonu{1,\ldots ,n_u}

\def\vecc{\text{vec}}

\def\B{\mathbb{B}}

\def\bigdelay{R}
\def\m{m} 

\def\kgkbin{\bin{K}(\bin{G}\bin{K})}


\begin{document}


\title{\LARGE \bf
On the Nearest Quadratically Invariant Information Constraint}

\author{Michael C. Rotkowitz \quad and \quad Nuno C. Martins
\thanks{M.C. Rotkowitz is with the Department of Electrical and
  Electronic Engineering,
  The University of Melbourne, Parkville VIC 3010 Australia,
  {\tt\small mcrotk@unimelb.edu.au}}%
\thanks{N.C. Martins is with the Department of Electrical and Computer
  Engineering and the Institute for Systems Research,
   The University of Maryland, College Park MD 20740 USA,
  {\tt\small nmartins@umd.edu}}%
}


\maketitle



\begin{abstract}
Quadratic invariance is a condition which has been shown to allow for
  optimal decentralized control problems to be cast as convex
  optimization problems.  The condition relates the constraints that the
  decentralization imposes on the controller to the structure of the
  plant.
In this paper, we consider the problem of
  finding the closest subset and superset of the decentralization
  constraint which are quadratically invariant when the original problem is not.
We show that this can itself be cast as a convex problem for the
  case where the controller is subject to delay constraints between
  subsystems, but that this fails when we only consider sparsity
  constraints on the controller.  For that case, we develop an
  algorithm that finds the closest superset in a fixed number of steps,
  and discuss methods of finding a close subset.



\end{abstract}



\section{Introduction}
\label{sec:intro}



The design of decentralized controllers has been of interest for a
long time, as evidenced in the surveys~\cite{witsenhausen_1971,sandell_1978}, and
continues to this day with the advent of complex interconnected
systems.
 The counterexample  constructed by Hans Witsenhausen in 1968~\cite{witsenhausen_1968} clearly illustrates the fundamental
reasons why problems in decentralized control are difficult.

Among the recent results in decentralized control, new approaches have been introduced that are based
on algebraic principles, such as the work in~\cite{rotkowitz_lall_tac05,qi_note,voulgaris_2001}.
Very relevant to this paper is the work
in~\cite{rotkowitz_lall_tac05,qi_note}, which classified the
problems for which optimal decentralized synthesis
could be cast as a convex optimization problem.
Here, the plant is linear, time-invariant and it
is partitioned into dynamically coupled subsystems, while the controller is also partitioned into subcontrollers.
In this framework, the decentralization being imposed
manifests itself as constraints on the controller to be designed, often
called the \emph{information constraint}.

The information constraint on the overall controller specifies what information is
available to which controller.
For instance, if information is passed between subsystems, such that
each controller can access the outputs from other subsystems after
different amounts of transmission time, then the information constraints are delay constraints,
and may be represented by a matrix of these transmission delays.
If instead, we consider each controller to be able to access the
outputs from some subsystems but not from others, then the information
constraint is a sparsity constraint, and may be represented by a
binary matrix.

Given such pre-selected information constraints, the existence of a
convex parameterization for all stabilizing controllers that satisfy the constraint can be determined via the algebraic test
introduced in~\cite{rotkowitz_lall_tac05,qi_note}, which is denoted as \emph{quadratic invariance}.
In contrast with prior work, where the information constraint on the controller is fixed beforehand,
this paper addresses the design of the information constraint itself. More specifically,
given a plant and a pre-selected information constraint that is not quadratically invariant, we give
explicit algorithms to compute the quadratically invariant information constraint that is closest to the pre-selected one.
We consider finding the closest quadratically invariant superset,
which corresponds to relaxing the pre-selected constraints as little as
possible to get a tractable decentralized control problem, which may
then be used to obtain a lower bound on the original problem, as well
as finding the closest quadratically invariant subset,
which corresponds to tightening the pre-selected constraints as little as
possible to get a tractable decentralized control problem, which may
then be used to obtain upper bounds on the original problem.

We consider the two particular cases of information constraint outlined above.
 In the first case, we consider constraints as transmission delays between
the output of each subsystem and the subcontrollers that are connected
 to it. The distance between any two information constraints
is quantified via a norm of the difference between the delay
matrices, and we show that we can find the closest quadratically
invariant set, superset, or subset as a convex optimization
problem.

In the second case, we consider sparsity constraints that represent
which controllers can access which subsystem outputs, and represent
such constraints with binary matrices.
The distance between information constraints is then given by the hamming distance,
applied to the binary sparsity matrices.
We provide an algorithm that gives the closest superset; that is,
the quadratically invariant constraint that can be obtained 
 by way of \emph{allowing} the least number of additional links, and
 show that it terminates in a fixed number of iterations.
For the problem of finding a close set or subset, we discuss some heuristic-based solutions.

\textbf{Paper organization:} Besides the introduction, this paper has six sections. Section~\ref{sec:prelims} presents
the notation and the basic concepts used throughout the paper. The delay and sparsity constraints adopted in
our work are  described in detail in Section~\ref{sec:probform},
while their characterization using quadratic invariance is given in Section~\ref{sec:qi}. The
main problems addressed in this paper are formulated  and solved  in Section~\ref{sec:closest}.
Section~\ref{sec:nltv} briefly notes how this work also applies when
assumptions of linear time-invariance are dropped, numerical examples
are given in Section~\ref{sec:numex}, and conclusions are given in Section~\ref{sec:conc}.

\section{Preliminaries}
\label{sec:prelims}



Throughout the paper, we adopt a given causal linear time-invariant continuous-time plant $P$ partitioned as follows:
\[
P=\bmat{P_{11} & P_{12} \\ P_{21} & G}
\] Here, $P\in\rp^{(n_y+n_z) \times (n_w+n_u)}$, where
$\rp^{q \times r}$ denotes the set of matrices of dimension $q$ by $r$,
whose entries are proper transfer functions of the Laplace complex variable $s$. 
Note that we abbreviate $G=P_{22}$, since we will
refer to that block frequently, and so that we may refer to its
  subdivisions without ambiguity. 

 Given a causal linear time-invariant controller $K$ in $\rp^{n_u\times n_y}$, we define the \eemph{closed-loop map} by
\[
f(P,K) \overset{def}{=} P_{11} + P_{12} K (I - GK)^{-1}P_{21}
\] where we assume that the feedback interconnection is well posed.
The map $f(P,K)$ is also called the (lower) \eemph{linear fractional
  transformation} (LFT) of $P$ and $K$.   This interconnection is shown in
Figure~\ref{fig:lft}.

\begin{figure}[htb]
\centerline{\includegraphics[width=0.25\textwidth]{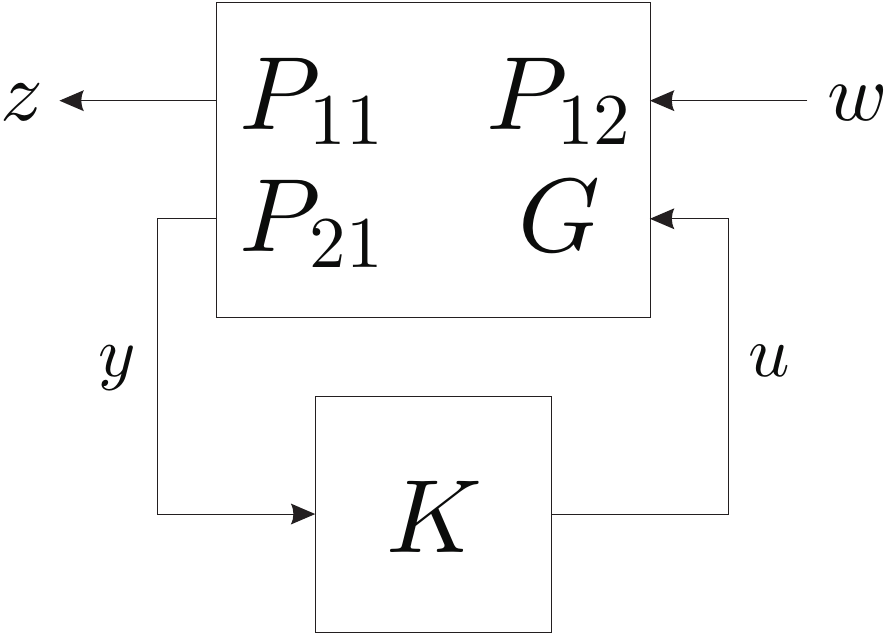}}
\caption{Linear fractional interconnection of $P$ and $K$}
\label{fig:lft}
\end{figure}

We suppose that there are $n_y$~sensor measurements and $n_u$~control actions,
and thus  partition the sensor measurements and
control actions as
\[
y=\bmat{y_1^T & \hdots & y_{n_y}^T}^T 
\qquad
u=\bmat{u_1^T & \hdots & u_{n_u}^T}^T 
\]
and then further partition $G$ and $K$ as
\[
G=
\small{
\bmat{G_{11} & \hdots & G_{1{n_u}} \\
  \vdots & & \vdots\\
G_{{n_y}1} & \hdots & G_{{n_y}{n_u}}}
}
\qquad
\small{
K=\bmat{K_{11} & \hdots & K_{1{n_y}} \\
  \vdots & & \vdots\\
K_{{n_u}1} & \hdots & K_{{n_u}{n_y}}}
}
\]


Given $A\in\R^{m\times n}$, we may write $A$ in term of its columns as
\[
A=\bmat{a_1 & \ldots & a_n}
\]
and then associate a vector
$\vecc(A)\in\R^{mn}$ defined by
\[
\vecc(A) \overset{def}{=}\bmat{a_1^T & \cdots & a_n^T}^T
\]
Further notation will be introduced as needed.

\subsection{Delays}
We define $\delop(\cdot)$ for a causal operator as the smallest amount of time in which an
input can affect its output.
For any causal linear time-invariant operator $\H:\genLe\rightarrow\genLe$, its delay is defined as:
\begin{align*}
\delop(\H) ~\overset{def}{=}~ \inf\{\tau\geq 0 ~\vert~ &z(\tau)\neq 0,
~z=\H(w), w\in\genLe,\\
&\text{ where }~w(t)=0~,~t\leq 0 \}
\end{align*}
and if $\H=0$, we consider its delay to be infinite. Here $\mathcal{L}_e$ is the domain
of $H$, which can be any extended $p$-normed Banach space of functions of non-negative continuous time with  co-domain in the reals. 

If  the map $\H$ has a well defined impulse response function $h$, then the delay of $H$ can be expressed as:
\[
\delop(\H) ~=~ \inf\{\tau\geq 0 ~\vert~ \h(\tau)\neq 0 \}\\
\]



\subsection{Sparsity}
We adopt the following notation to streamline our use of 
sparsity patterns and sparsity constraints.

\subsubsection{Binary algebra}
Let $\B=\{0,1\}$ represent the set of binary numbers. 
Given $x,y\in\B$, we define the following basic operations:
\begin{align*}
x+y ~&\overset{def}{=}~ 
\begin{cases}
  0, & \text{if } x=y=0 \\
  1, & \text{otherwise}
\end{cases}, \qquad x,y \in \mathbb{B}\\
xy ~&\overset{def}{=}~ 
\begin{cases}
  1, & \text{if } x=y=1 \\
  0, & \text{otherwise}
\end{cases}, \qquad x,y \in \mathbb{B}
\end{align*}

Given $X,Y\in\B^{\m\times n}$, we say that $X\leq Y$ holds if and only if $X_{ij}\leq
Y_{ij}$ for all $i,j$ satisfying $1 \leq i \leq \m$ and $1 \leq j \leq n$, where
$X_{ij}$ and $Y_{ij}$ are the entries at the $i$-th row and $j$-th column of the
binary matrices $X$ and $Y$, respectively.

Given $X,Y,Z\in\B^{\m\times n}$, these definitions lead to the following immediate consequences:
\begin{align}
Z=X+Y ~~&\Rightarrow~~ Z\geq X\label{b1}\\
X+Y=X ~~&\Leftrightarrow~~ Y\leq X\label{b2}\\
X\leq Y, ~Y\leq X ~~&\Leftrightarrow~~ X=Y\label{b3}
\end{align}

Given $X\in\B^{\m\times n}$, we use the following notation to represent the total number
of nonzero indeces in~$X$:
 \begin{equation*}
\NN(X) ~\overset{def}{=}~ \sum_{i=1}^\m \sum_{j=1}^n X_{ij}, \qquad X \in \B^{\m \times n}
\end{equation*}
with the sum taken in the usual way.

\subsubsection{Sparsity patterns}





Suppose that $\bin{A} \in \B^{\m \times n}$ is a binary matrix.
The following is the subspace of $\rp^{\m\times n}$ comprising the
transfer function matrices that satisfy the sparsity constraints imposed by $\bin{A}$:
\begin{align*}
\sparse(\bin{A}) \overset{def}{=}
\bl\{& B \in \rp^{\m\times n}  \ \vert \ B_{ij}(\jw) = 0 \text{ for all } i,j\\
&\text{ such that }   \bin{A}_{ij} =0
\text{ for almost all }\omega\in\R\br\}.
\end{align*}  
Conversely, given $B\in \rp^{\m\times n}$, we define $\pattern( B ) \overset{def}{=} \bin{A}$, where $\bin{A}$ is  the binary matrix given by:
\[
 \bin{A}_{ij} =
 \begin{cases} 0, & \text{if $B_{ij}(\jw) = 0$ for almost all $\omega\in\R$} \\
  1, & \text{otherwise}
\end{cases}, 
\]
for  $i\in\{\onetom\}, ~j\in\{\oneton\}.$

\section{Optimal Control Subject to Information Constraints}
\label{sec:probform}
In this section,  we give a detailed description of the two main types of information 
constraints adopted in this paper, namely, delay constraints and sparsity constraints. 


\subsection{Delay constraints}
\label{sec:delay_setup}

Consider  a plant comprising multiple subsystems which
may affect one another subject to propagation delays, and which may
communicate with one another with given transmission delays.
In what follows, we give a precise definition of these types of delays.

\subsubsection{Propagation Delays}

For any pair of subsystems $i$ and $j$, we define the propagation delay
$p_{ij}$ as the amount of time before a controller action at subsystem
$j$ can affect an output at subsystem $i$, as such:
%

%
\[
p_{ij}~\overset{def}{=}~\delop(G_{ij}),
\]
$\text{for all}~~i\in\{\onetony\}, ~j\in\{\onetonu\}$.

\subsubsection{Transmission Delays}

For any pair of subsystems $k$ and $l$, we define the (total)
transmission delay $t_{kl}$ as the minimum amount of time before
the controller of subsystem $k$ may use outputs from subsystem~$l$. 
Given these constraints, we can define the overall subspace of
admissible controllers
$S$ such that $K\in S$
if and only if the following holds:
\[
\delop(K_{kl}) ~\geq~ t_{kl}, 
\]
$\text{for all}~~k\in\{\onetonu\}, ~l\in\{\onetony\}$.

\subsection{Sparsity Constraints}
\label{sec:sparse_setup}

We now introduce the other main class of constraints we will consider in
this paper, where each control input may access certain sensor
measurements, but not others.

We represent sparsity constraints on the overall controller via a
binary matrix $\bin{K}\in\B^{n_u\times n_y}$. Its entries
can be interpreted as follows:
\[
\bin{K}_{kl} ~=~ 
\begin{cases}
1, & \text{if control input $k$}\\
& \text{may access sensor measurement $l$},\\
0, & \text{if not},
\end{cases} 
\]
for all $k\in\{\onetonu\}, ~l\in\{\onetony\}$.

The subspace of admissible controllers can be expressed as:
\[
S ~=~ \sparse(\bin{K}).
\]
From the quadratic invariance test introduced in \cite{rotkowitz_lall_tac05,qi_note}, we find that the relevant information about the plant is its sparsity pattern $\bin{G}$, obtained from:
\[
\bin{G} ~=~ \pattern(G)
\]
where $\bin{G}$ is interpreted as follows:
\[
\bin{G}_{ij} ~=~
\begin{cases}
1, & \text{if control input $j$}\\
& \text{affects sensor measurement $i$},\\
0, & \text{if not},
\end{cases}
\]
for all $i\in\{\onetony\}, ~j \in \{\onetonu\}$.

\subsection{Optimal Control Design Via Convex Programming}
\label{sec:setup}
Given a generalized plant $P$ and a subspace of appropriately dimensioned 
causal linear time-invariant controllers~$S$,
the following is a class of constrained optimal control problems:
\begin{equation}
\label{eqn:min_unstable_constrained_K}
\begin{aligned}
\underset{K}{\text{minimize}} \quad & \norm{f(P,K)} \\
\text{subject to} \quad & K \text{ stabilizes }P \\
& K \in S
\end{aligned}
\end{equation}
Here $\norm{\cdot}$ is any norm 
on the closed-loop map
chosen to encapsulate the control performance objectives.
The delays associated with dynamics propagating from one subsystem to
another, or the sparsity associated with them not propagating at all,
 are embedded in $P$.
The subspace of admissible controllers, $S$, has been defined to
encapsulate the constraints on how quickly information may be passed
from one subsystem to another (delay constraints) or whether it can be
passed at all (sparsity constraints).
We call the subspace~$S$ the \eemph{information constraint}.

Many decentralized control problems may be expressed in the form of
problem~\eqref{eqn:min_unstable_constrained_K}, including all of those
addressed in~\cite{siljak_1994, qi_voulgaris, rotkowitz_lall_tac05}.  In this paper, we focus
on the case where $S$ is defined by delay constraints or sparsity
constraints as discussed above.

This problem is made substantially more difficult in general by
the constraint that $K$ lie in the subspace~$S$.  Without this
constraint, the problem may be solved with many standard techniques.
Note that the cost
function $\norm{f(P,K)}$ is in general a non-convex function of
$K$.  No computationally tractable approach is known for solving
this problem for arbitrary~$P$ and~$S$.

\section{Quadratic Invariance}
\label{sec:qi}
In this section, we define quadratic invariance, and we give a brief
overview of related results, in particular, that if it holds then
convex synthesis of optimal decentralized controllers is possible.
\begin{defn}
 Let a causal linear time-invariant plant, represented via a transfer function matrix $G$
 in $\rp^{n_y \times n_u}$, be given. If $S$ is a subset of $\rp^{n_u \times n_y}$ 
then $S$ is called  \eemph{quadratically invariant}
 under~$G$ if the following inclusion holds:
  \[
   KGK\in S \qquad \text{for all } K\in S.
  \]
\end{defn} 


It was shown in~\cite{rotkowitz_lall_tac05} that if $S$ is a closed
subspace and
$S$ is quadratically invariant under~$G$, then
with a change of variables,
problem~\eqref{eqn:min_unstable_constrained_K} is equivalent to the
following optimization problem
\begin{equation}
\label{eqn:min_convex_Q}
\begin{aligned}
\underset{Q}{\text{minimize}} \quad & \norm{T_1 - T_2 Q T_3} \\
\text{subject to} \quad & Q\in \mathcal{R}\bspace{H}_\infty\\
& Q \in S \\
\end{aligned}
\end{equation}
where $T_1,T_2,T_3\in\rhi$. Here $\rhi$ is used to indicate that $T_1$, $T_2$, $T_3$ and $Q$ are 
proper transfer function matrices with no poles in $\mathbb{C}_+$ (stable) . 


The optimization problem in (\ref{eqn:min_convex_Q}) is convex. We may solve it to find the
optimal $Q$, and then recover the optimal $K$ for our original problem as stated in (\ref{eqn:min_unstable_constrained_K}). If the norm of interest is the $\Htwo$-norm, it was shown
in~\cite{rotkowitz_lall_tac05}
that the problem
can be further reduced to an unconstrained optimal control problem and
then solved with standard software. Similar
results have been
achieved~\cite{qi_note}
for function spaces beyond $\mathcal{L}_e$ as well, also showing that quadratic invariance
allows optimal linear decentralized control problems to be recast as
convex optimization problems.

The main focus of this paper is thus characterizing information constraints $S$ which
are as close as possible to a pre-selected one, and for which
$S$ is quadratically invariant under the plant $G$.

\subsection{QI - Delay Constraints}
\label{sec:qi_delays}

For the case of delay constraints, it was shown
in~\cite{rotkowitz_cdc05} that 
 a necessary and sufficient condition for quadratic invariance
is
%


\begin{equation}
\label{eqn:tptt}
t_{ki}+p_{ij}+t_{jl}~\geq~ t_{kl}, 
\end{equation}
for all $i,l \in \{ \onetony \},$ and  all $j,k \in \{ \onetonu \}$.


Note that it was further shown in~\cite{rotkowitz_cdc05} that if
we consider the typical case of $n$ subsystems, each with its own
controller, such that $n=n_y=n_u$, and if
 the transmission delays satisfy the triangle inequality, then the
quadratic invariance test can be further reduced to the following
inequality:
\begin{equation}
\label{eqn:pt}
p_{ij}~\geq~ t_{ij}, 
\end{equation}
for all $i,j \in \{ \oneton \}$; 
that is, the communication between any two nodes needs to be as fast
as the propagation between the same pair of nodes.

\subsection{QI - Sparsity Constraints}
\label{sec:qi_sparsity}

%


For the case of sparsity constraints, it was shown
in~\cite{rotkowitz_lall_tac05} that a necessary and sufficient
condition for quadratic invariance is
\begin{equation}
\label{eqn:kgkkbin}
K^\text{bin}_{ki}~G^\text{bin}_{ij}~K^\text{bin}_{jl}~(1-K^\text{bin}_{kl})
=  0,
\end{equation}
for all $i,l \in \{ \onetony \}$, and all  $j,k \in \{ \onetonu \}$.

It can be shown that this is equivalent to Condition~\eqref{eqn:tptt}
if we let
\begin{align}
t_{kl} ~&=~ \begin{cases}
\bigdelay, & \text{if }\bin{K}_{kl}=0\\
0, & \text{if }\bin{K}_{kl}=1
\end{cases}\label{eqn:t_to_bin}, 
\intertext{for all  $k \in \{ \onetonu \}$, and all $l \in \{ \onetony
  \}$, and let}
p_{ij} ~&=~ \begin{cases}
\bigdelay, & \text{if }\bin{G}_{ij}=0\\
0, & \text{if }\bin{G}_{ij}=1
\end{cases}\label{eqn:p_to_bin}, 
\end{align}
for all $i \in  \{ \onetony \}$, and all $j \in \{ \onetonu \}$,
for any $\bigdelay > 0$, the interpretation being that a sparsity
constraint can be thought of as a large delay, and a lack thereof can
be thought of as no delay.

\section{Closest QI Constraint}
\label{sec:closest}
We now address the main question of this paper, which is finding the
closest constraints when the above conditions fail; that is, when the
original problem is not quadratically invariant.

\subsection{Closest - Delays}
\label{sec:closest_delays}
Suppose that we are given propagation delays $\{ \porig_{ij} \}_{i=1,j=1}^{n_y,n_u}$ and transmission
delays $\{ \torig_{kl} \}_{k=1,l=1}^{n_u,n_y}$, and that they do not satisfy
Condition~\eqref{eqn:tptt}.  The problem of finding the closest
constraint set, that is, the transmission delays $\{ \tvar_{kl} \}_{k=1,l=1}^{n_u,n_y}$
that are closest to  $\{ \torig_{kl} \}_{k=1,l=1}^{n_u,n_y}$  while satisfying \eqref{eqn:tptt}, can be set up as follows:
\begin{equation}
\label{eqn:closest_trans}
\begin{aligned}
\underset{t}{\text{minimize}} \quad & \norm{\vecc(\tvar-\torig)} & \\
\text{subject to} \quad & \tvar_{ki}+\porig_{ij}+t_{jl}~\geq~\tvar_{kl}, 
\qquad  &\forall~i,j,k,l,\\
    & t_{kl} \geq 0,  \qquad &\forall~k,l.
\end{aligned}
\end{equation}


This is a convex optimization problem in the new transmission
delays~$\tvar$.  The norm is arbitrary, and may be chosen to
encapsulate whatever notion of closeness is most appropriate.  If the
1-norm is chosen, corresponding to minimizing the sum of the
differences in transmission delays, or the $\infty$-norm is chosen,
corresponding to minimizing the largest difference, then the problem
may be cast as a linear program~(LP).

If we want to find the closest quadratically invariant set, which is a
superset of the original set, so that we may obtain a lower bound to
the solution of the main
problem~\eqref{eqn:min_unstable_constrained_K}, then we simply add the
constraint $\tvar_{kl}\leq\torig_{kl}$ for all~$k,l$, and the problem remains convex (or
remains an LP). Note that if we follow the aforementioned procedure and choose the 1-norm,
then the objective is equivalent to maximizing the total delay sum ~$\sum_{k=1}^{n_u}\sum_{l=1}^{n_y}\tvar_{kl}$.

Similarly, if we want to find the closest quadratically invariant set which is a
subset of the original set, so that we may obtain an upper bound to
the solution of the main
problem~\eqref{eqn:min_unstable_constrained_K}, then we simply add the
constraint $\tvar_{kl}\geq\torig_{kl}$ for all~$k,l$, and the problem remains convex (or
remains an LP). For this procedure and if we choose the 1-norm,
then the objective is equivalent to minimizing the total delay sum ~$\sum_{k=1}^{n_u}\sum_{l=1}^{n_y}\tvar_{kl}$.

\subsection{Closest - Sparsity}
Now suppose that we want to construct the closest
quadratically invariant set, superset, or subset, defined by sparsity
constraints.  We can recast a pre-selected sparsity constraint on
the controller $\bin{K}$ and a given sparsity pattern of the plant $\bin{G}$
as in~\eqref{eqn:t_to_bin},\eqref{eqn:p_to_bin}, and then set up
problem~\eqref{eqn:closest_trans}. 
 The only problem is that for the
resulting solution to correspond to a sparsity constraint, we need to
add the binary constraints $\tvar_{kl}\in\{0,\bigdelay\}$ for all $k,l$,
and this destroys the convexity of the problem.

\subsubsection{Sparsity Superset}
Consider first finding the closest quadratically invariant superset of
the original constraint set; that is, the sparsest quadratically
invariant set for which all of the original connections
$y_l\rightarrow u_k$ are still in place.

This is equivalent to solving the above
problem~\eqref{eqn:closest_trans} with $\tvar_{kl}\leq\torig_{kl}$ for
all~$k,l$, and with the binary constraints, an intractable
combinatorial problem, but we present an algorithm which solves it and
terminates in a fixed number of steps.

We can write the problem as
\begin{equation}
\label{eqn:closest_super_sparse}
\begin{aligned}
\underset{Z \in \B ^{n_u \times n_y} } {\text{minimize}} \quad & \NN(Z) \\
\text{subject to} \quad & Z\bin{G}Z ~\leq~ Z\\
& \bin{K} ~\leq~ Z
\end{aligned}
\end{equation}
where additions and multiplications are as defined for the binary
algebra in the preliminaries, and where we will wish to use the
information constraint $S=\sparse(Z)$.
The objective is defined to give us the
sparsest possible solution, the first constraint ensures that the
constraint set associated with the solution is quadratically invariant
with respect to the plant, and the last constraint requires the
resulting set of controllers to be able to access any information that
could be accessed with the original constraints. Let the optimal
solution to this optimization problem be denoted as
$Z^*\in\B^{n_u\times n_y}$.

Define a sequence of sparsity constraints $\{Z_m\in\B^{n_u\times n_y},
~m\in\N\}$ given by
\begin{align}
\label{eqn:sequence_init}
Z_0 ~&=~ \bin{K}\\
\label{eqn:sequence_step}
Z_{m+1} ~&=~ Z_m+Z_m\bin{G}Z_m,\quad ~m\geq 0
\end{align}
again using the binary algebra.

Our main result will be that this sequence converges to $Z^*$, and
that it does so in $\log_2 n$ iterations.  
We first prove several
preliminary lemmas, and start with a lemma elucidating which terms comprise which elements of
the sequence.
\begin{lem}
\label{lem:sequence_components}
\begin{equation}
\label{eqn:sequence_components}
Z_m ~=~ \sum_{s=0}^{2^m-1}\kgkbin^s \qquad\forall~m\in\N
\end{equation}
\end{lem}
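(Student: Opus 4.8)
The plan is to prove~\eqref{eqn:sequence_components} by induction on $m$, working throughout in the binary algebra of the preliminaries, where matrix multiplication distributes over matrix addition, both operations are associative, and—most importantly—the binary addition is idempotent, i.e. $X+X=X$ for every $X\in\B^{n_u\times n_y}$ (immediate from the definition of binary addition, applied entrywise). I read the power $\kgkbin^{s}$ as $\bin{K}(\bin{G}\bin{K})^{s}$, the inner block $\bin{G}\bin{K}$ being the square $n_y\times n_y$ matrix that can legitimately be raised to a power. The base case $m=0$ is then immediate: the right-hand side of~\eqref{eqn:sequence_components} collapses to the single term $\kgkbin^{0}=\bin{K}(\bin{G}\bin{K})^{0}=\bin{K}$, which is exactly $Z_0$ by~\eqref{eqn:sequence_init}.

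For the inductive step I would assume $Z_m=\sum_{s=0}^{2^m-1}\kgkbin^{s}$ and substitute it into the recursion~\eqref{eqn:sequence_step}. The heart of the computation is to expand, using distributivity,
\[
Z_m\,\bin{G}\,Z_m \;=\; \sum_{s=0}^{2^m-1}\sum_{t=0}^{2^m-1}\bin{K}(\bin{G}\bin{K})^{s}\,\bin{G}\,\bin{K}(\bin{G}\bin{K})^{t},
\]
and then to invoke the associativity identity $\bin{K}(\bin{G}\bin{K})^{s}\,\bin{G}\,\bin{K}(\bin{G}\bin{K})^{t}=\bin{K}(\bin{G}\bin{K})^{s+t+1}=\kgkbin^{\,s+t+1}$, which holds because $\bin{G}\cdot\bin{K}(\bin{G}\bin{K})^{t}=(\bin{G}\bin{K})^{t+1}$. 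Thus every term of the double sum has the form $\kgkbin^{\,r}$ with $r=s+t+1$.

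The main obstacle, and where the binary structure does the real work, is the bookkeeping of these exponents. As $(s,t)$ ranges over $\{0,\dots,2^m-1\}^2$, the exponent $r=s+t+1$ ranges over $\{1,\dots,2^{m+1}-1\}$, and every integer in that range is attained, since $s+t$ attains every value in $\{0,\dots,2(2^m-1)\}$. By idempotency the repeated terms collapse, giving $Z_m\,\bin{G}\,Z_m=\sum_{r=1}^{2^{m+1}-1}\kgkbin^{\,r}$. Adding $Z_m=\sum_{s=0}^{2^m-1}\kgkbin^{s}$ and again using idempotency to merge the overlapping index ranges $\{0,\dots,2^m-1\}$ and $\{1,\dots,2^{m+1}-1\}$ into $\{0,\dots,2^{m+1}-1\}$ yields $Z_{m+1}=\sum_{s=0}^{2^{m+1}-1}\kgkbin^{s}$, closing the induction. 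I expect the only delicate points to be confirming that distributivity and associativity are legitimate over the binary algebra and verifying that the exponent range is covered exactly; both are routine once the binary semiring viewpoint is adopted, and the doubling of the exponent bound from $2^m-1$ to $2^{m+1}-1$ is precisely what will later give the $\log_2 n$ convergence rate.
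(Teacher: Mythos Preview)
Your proposal is correct and follows essentially the same inductive route as the paper: establish the base case from~\eqref{eqn:sequence_init}, expand $Z_{m+1}=Z_m+Z_m\bin{G}Z_m$ using the inductive hypothesis, observe that every resulting summand has the form $\kgkbin^{s}$, and check that the exponents cover exactly $\{0,\dots,2^{m+1}-1\}$. Your write-up is in fact a bit more explicit than the paper's, spelling out the idempotency of binary addition as the mechanism that collapses repeated exponents, whereas the paper leaves this implicit.
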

\begin{proof}
For $m=0$, this follows immediately from~\eqref{eqn:sequence_init}.
We then assume that~\eqref{eqn:sequence_components} holds for a given
$m\in\N$, and consider $m+1$.  Then,
\begin{multline*}
Z_{m+1} ~=~ \sum_{i=0}^{2^m-1}\kgkbin^i +\\
\left(\sum_{k=0}^{2^m-1}\kgkbin^k\right) 
\bin{G}
\left(\sum_{l=0}^{2^m-1}\kgkbin^l\right).
\end{multline*}
All terms on the R.H.S. are of the form $\kgkbin^s$
for various $s\in\N$.  Choosing  $0\leq i\leq 2^m-1$ gives 
$0\leq s\leq 2^m-1$, and choosing $k=2^m-1$ with $0\leq l\leq 2^m-1$
gives $2^m\leq s\leq (2^m-1)+1+(2^m-1)=2^{m+1}-1$. This last term is
the highest order term, so we then have
$Z_{m+1}=\sum_{s=0}^{2^{m+1}-1}\kgkbin^s$ and the
proof follows by induction.
\end{proof}

We now give a lemma showing how many of these terms need to be considered.
\begin{lem}
\label{lem:num_terms}
The following holds for $n=\min\{n_y,n_u\}$:
\begin{equation}
\label{eqn:num_terms}
\kgkbin^r ~\leq~
\sum_{s=0}^{n-1}\kgkbin^s
\quad~~\forall~~ r\in\N.
\end{equation}
\end{lem}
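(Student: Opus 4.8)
The plan is to read each entry of $\kgkbin^{s}$ combinatorially as the existence of an alternating walk between the index sets, and then to bound the walk length by $n-1$ via a pigeonhole/excision argument. Expanding the binary product, the $(k,l)$ entry of $\kgkbin^{r}=\bin{K}(\bin{G}\bin{K})^{r}$ equals $1$ exactly when there exist indices $i_0,\ldots,i_r\in\{\onetony\}$ and $j_1,\ldots,j_r\in\{\onetonu\}$ with $i_r=l$ such that
\[
\bin{K}_{k,i_0}\,\bin{G}_{i_0,j_1}\,\bin{K}_{j_1,i_1}\,\bin{G}_{i_1,j_2}\cdots\bin{G}_{i_{r-1},j_r}\,\bin{K}_{j_r,i_r}=1,
\]
i.e.\ when there is a walk $k,i_0,j_1,i_1,\ldots,j_r,i_r$ alternating between the vertices $k,j_1,\ldots,j_r$ in $\{\onetonu\}$ and the vertices $i_0,\ldots,i_r$ in $\{\onetony\}$, with consecutive vertices joined by a nonzero entry of $\bin{K}$ or $\bin{G}$ as appropriate.

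First I would count the vertices of such a walk: there are $r+1$ drawn from $\{\onetonu\}$ (namely $k,j_1,\ldots,j_r$) and $r+1$ drawn from $\{\onetony\}$ (namely $i_0,\ldots,i_r$). If $r\geq n=\min\{n_y,n_u\}$, then $r+1$ strictly exceeds the size of whichever index set is smaller, so by the pigeonhole principle at least one vertex repeats: either $j_a=j_b$ for some $a<b$ (a repeat in $\{\onetonu\}$) or $i_a=i_b$ for some $a<b$ (a repeat in $\{\onetony\}$); note that the endpoints $k$ and $l=i_r$ already sit among these vertices, so they are automatically covered.

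Next I would excise the closed subwalk between the two occurrences of the repeated vertex and splice the walk back together at that vertex. Because the repeat lies within a single index family, the spliced sequence is again an alternating walk of the same form with the same endpoints $k$ and $l$, but strictly fewer factors; each surviving factor is still a nonzero entry of $\bin{K}$ or $\bin{G}$, so the corresponding entry of the lower power $\kgkbin^{s}$ with $s<r$ is again $1$. Iterating strictly decreases the length at each step and hence terminates with some $s\leq n-1$. Thus $\bigl(\kgkbin^{r}\bigr)_{kl}=1$ implies $\bigl(\kgkbin^{s}\bigr)_{kl}=1$ for some $s\in\{0,\ldots,n-1\}$, which by the definitions of $\leq$ and $+$ in the binary algebra is precisely~\eqref{eqn:num_terms}.

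The main obstacle I anticipate is the bookkeeping of the excision: I must verify that the spliced sequence is genuinely an admissible term of a lower power of $\kgkbin$, checking that a repeat in the $\{\onetonu\}$ family and a repeat in the $\{\onetony\}$ family are both resolved by the same splicing step and that the endpoints are preserved. Coupled to this is the observation that it is the \emph{smaller} of the two index sets that triggers the pigeonhole, which is exactly why the sharp bound features $n=\min\{n_y,n_u\}$ rather than $n_y$ or $n_u$ alone.
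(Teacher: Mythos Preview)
Your proposal is correct and follows essentially the same approach as the paper: expand the $(k,l)$ entry of $\bin{K}(\bin{G}\bin{K})^{r}$ as a sum over alternating index sequences, invoke the pigeonhole principle on whichever of the two index families is smaller to force a repeated vertex when $r\geq n$, and excise the resulting closed subwalk to obtain a nonzero term of strictly lower order. The paper's proof differs only in bookkeeping---it indexes the internal vertices as $i_1,\ldots,i_r$ and $j_1,\ldots,j_r$ and treats the cases ``repeat equals the endpoint'' and ``two internal repeats'' separately, whereas you absorb the endpoints $k$ and $l$ into the vertex lists and handle all repeats uniformly---but the argument is the same.
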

\begin{proof}
Follows immediately from~\eqref{b1} for $r\leq n-1$. Now consider
$r\geq n$,  $k\in\{\onetonu\}$, $l\in\{\onetony\}$.
Then
$
[\kgkbin^r]_{kl}=\sum
\bin{K}_{ki_1}\bin{G}_{i_1j_1}\bin{K}_{j_1i_2}\bin{G}_{i_2j_2}
\cdots\bin{G}_{i_rj_r}\bin{K}_{j_rl}
$
where the sum is taken over all possible $i_\alpha\in\{\onetony\}$ and
$j_\alpha\in\{\onetonu\}$.
Consider an arbitrary such summand term that is equal to 1, and note
that each component term must be equal to 1.  

If $n=n_y$ (i), then
by the pigeonhole principle either $\exists\alpha$ s.t. $i_\alpha=l$ (i.a), or
$\exists\alpha,\beta$, with $\alpha\neq\beta$,
s.t. $i_\alpha=i_\beta$ (i.b).  In case (i.a), we have
 $\bin{K}_{ki_1}\bin{G}_{i_1j_1}\cdots\bin{G}_{i_{\alpha-1}j_{\alpha-1}}\bin{K}_{j_{\alpha-1}l}=1$,
or in case (i.b), we have 
$\bin{K}_{ki_1}\cdots\bin{K}_{j_{\alpha-1}i_\alpha}\bin{G}_{i_{\beta}j_\beta}\cdots\bin{K}_{j_rl}=1$.
In words, we can bypass the part of the path that merely took
$y_{i_\alpha}$ to itself, leaving a shorter path that still connects
$y_l\rightarrow u_k$.

Similarly, if $n=n_u$ (ii), then either $\exists\alpha$
s.t. $j_\alpha=k$ (ii.a), or $\exists\alpha,\beta$ with
$\alpha\neq\beta$ s.t. $j_\alpha=j_\beta$ (ii.b). In case (ii.a), we
have
$\bin{K}_{ki_{\alpha+1}}\bin{G}_{i_{\alpha+1}j_{\alpha+1}}\cdots\bin{K}_{j_rl}=1$,
or in case (ii.b), we have
$\bin{K}_{ki_1}\cdots\bin{G}_{i_{\alpha}j_\alpha}\bin{K}_{j_{\beta}i_{\beta+1}}\cdots\bin{K}_{j_rl}=1$,
where we have now bypassed the part of the path taking $u_{j_\alpha}$
to itself to leave a shorter path.

We have shown that, $\forall~r\geq n$, any non-zero component term of
$\kgkbin^r$ has a corresponding non-zero term of strictly lower order,
and the result follows.
\end{proof}

We now prove another
preliminary lemma showing that the optimal solution can be no more
sparse than any element of the sequence.
\begin{lem}
\label{lem:sparser_than_opt}
For $Z^*\in\B^{n_u \times n_y}$ and the sequence $\{Z_m\in\B^{n_u \times n_y},
~m\in\N\}$ defined as above, the following holds:
\begin{equation}
\label{eqn:sparser_than_opt}
Z^* ~\geq~ Z_m,  \quad ~m\in\N
\end{equation}
\end{lem}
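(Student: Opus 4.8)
The plan is to prove the inequality by induction on $m$, using only the two feasibility constraints satisfied by the optimizer $Z^*$ of~\eqref{eqn:closest_super_sparse} together with elementary monotonicity properties of the binary algebra. For the base case $m=0$, the definition~\eqref{eqn:sequence_init} gives $Z_0=\bin{K}$, and the second constraint of~\eqref{eqn:closest_super_sparse} states exactly that $\bin{K}\leq Z^*$, so $Z^*\geq Z_0$ holds immediately.

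For the inductive step, I would assume $Z^*\geq Z_m$ and aim to establish $Z^*\geq Z_{m+1}=Z_m+Z_m\bin{G}Z_m$. The key auxiliary fact is monotonicity of binary matrix multiplication: if $A\leq A'$ and $B\leq B'$ then $AB\leq A'B'$, which follows directly from the entrywise definitions of binary addition and multiplication, since each binary product and each binary sum is nondecreasing in its arguments. Applying this with the inductive hypothesis $Z_m\leq Z^*$ in both factors yields $Z_m\bin{G}Z_m\leq Z^*\bin{G}Z^*$, and then the first (quadratic-invariance) constraint of~\eqref{eqn:closest_super_sparse}, namely $Z^*\bin{G}Z^*\leq Z^*$, gives $Z_m\bin{G}Z_m\leq Z^*$.

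At this point I have both $Z_m\leq Z^*$ and $Z_m\bin{G}Z_m\leq Z^*$, and it remains to combine them. Here I would use that binary addition is idempotent and monotone, so that two summands each dominated by $Z^*$ have a sum dominated by $Z^*+Z^*=Z^*$; concretely, $Z_{m+1}=Z_m+Z_m\bin{G}Z_m\leq Z^*+Z^*=Z^*$, which closes the induction. The only ingredient not already packaged in the stated consequences~\eqref{b1}--\eqref{b3} is the monotonicity of the binary product, and that is where I expect the (minor) work to lie; once it is in hand, both the base case and the inductive step reduce to direct substitution of the two feasibility constraints on $Z^*$.
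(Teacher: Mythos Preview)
Your proposal is correct and follows essentially the same inductive argument as the paper: the base case uses the constraint $\bin{K}\leq Z^*$, and the inductive step combines the hypothesis $Z_m\leq Z^*$ with monotonicity of the binary operations and the constraint $Z^*\bin{G}Z^*\leq Z^*$. The paper phrases the inductive step as the single inequality $Z^*+Z^*\bin{G}Z^*\geq Z_m+Z_m\bin{G}Z_m$ and then collapses the left-hand side via~\eqref{b2}, whereas you bound each summand of $Z_{m+1}$ separately before adding, but the logical content is identical.
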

\begin{proof}
First, $Z^* \geq Z_0 = \bin{K}$ is given by the satisfaction of the
last constraint of~\eqref{eqn:closest_super_sparse}, and it just
remains to show the inductive step. 

Suppose that $Z^*\geq Z_m$ for some $m\in\N$. It then follows that 
\[Z^*+Z^*\bin{G}Z^* ~\geq~ Z_m+Z_m\bin{G}Z_m.\]
From the first constraint of~\eqref{eqn:closest_super_sparse}
and~\eqref{b2} we know that the left hand-side is just $Z^*$, and then
using the definition of our sequence, we get
$Z^* ~\geq~ Z_{m+1}$
and this completes the proof.
\end{proof}

We now give a subsequent lemma, showing that if the sequence does
converge, then it has converged to the optimal solution.
\begin{lem}
\label{lem:conv_is_opt}
If $Z_{m^*}=Z_{m^*+1}$ for some $m^*\in\N$, then $Z_{m^*}=Z^*$.
\end{lem}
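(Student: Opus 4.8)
The plan is to show that the fixed point $Z_{m^*}$ is itself feasible for the optimization problem~\eqref{eqn:closest_super_sparse}, and then to combine feasibility with the domination bound $Z^* \geq Z_{m^*}$ already established in Lemma~\ref{lem:sparser_than_opt} to force equality. The whole argument is a cardinality squeeze: feasibility gives one inequality on $\NN$ via optimality of $Z^*$, while entrywise domination gives the reverse inequality.

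First I would verify the two constraints of~\eqref{eqn:closest_super_sparse} for $Z_{m^*}$. The covering constraint $\bin{K} \leq Z_{m^*}$ comes for free: the recursion~\eqref{eqn:sequence_step} together with~\eqref{b1} shows that $Z_{m+1} \geq Z_m$ for every $m$, so the sequence is monotone nondecreasing and $Z_{m^*} \geq Z_0 = \bin{K}$. The quadratic invariance constraint is where the convergence hypothesis enters. By definition $Z_{m^*+1} = Z_{m^*} + Z_{m^*}\bin{G}Z_{m^*}$, and the assumption $Z_{m^*} = Z_{m^*+1}$ turns this into $Z_{m^*} = Z_{m^*} + Z_{m^*}\bin{G}Z_{m^*}$. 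Applying~\eqref{b2} with $X = Z_{m^*}$ and $Y = Z_{m^*}\bin{G}Z_{m^*}$ then yields precisely $Z_{m^*}\bin{G}Z_{m^*} \leq Z_{m^*}$, so $Z_{m^*}$ is feasible.

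With feasibility in hand, optimality of $Z^*$ gives $\NN(Z^*) \leq \NN(Z_{m^*})$. On the other hand, Lemma~\ref{lem:sparser_than_opt} gives $Z^* \geq Z_{m^*}$ entrywise, which implies $\NN(Z^*) \geq \NN(Z_{m^*})$, since each nonzero entry of $Z_{m^*}$ must also be nonzero in $Z^*$. Hence $\NN(Z^*) = \NN(Z_{m^*})$. Finally, a binary matrix that dominates another entrywise while carrying the same number of ones can contain no extra ones, so $Z^* \leq Z_{m^*}$ as well; combined with $Z^* \geq Z_{m^*}$ and~\eqref{b3}, this gives $Z^* = Z_{m^*}$.

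I do not anticipate a serious obstacle here: the only subtlety is correctly identifying that the quadratic invariance constraint is exactly what the fixed-point condition delivers through~\eqref{b2}, whereas the covering constraint $\bin{K} \leq Z_{m^*}$ follows from monotonicity of the sequence rather than from the hypothesis. The small care point is the closing cardinality step, where one must invoke equal count together with entrywise domination to conclude equality rather than merely inequality.
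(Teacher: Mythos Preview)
Your proof is correct and follows essentially the same approach as the paper: show $Z_{m^*}$ is feasible for~\eqref{eqn:closest_super_sparse} (covering via monotonicity, quadratic invariance via the fixed-point hypothesis and~\eqref{b2}), then squeeze against $Z^*$ using Lemma~\ref{lem:sparser_than_opt}. The only difference is cosmetic: the paper asserts directly that optimality of $Z^*$ gives $Z^* \leq Z_{m^*}$ and then invokes~\eqref{b3}, whereas you spell out the intermediate cardinality step ($\NN(Z^*) \leq \NN(Z_{m^*})$ from optimality, $\NN(Z^*) \geq \NN(Z_{m^*})$ from entrywise domination, hence equality), which is arguably the more careful justification of that same conclusion.
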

\begin{proof}
If $Z_{m^*}=Z_{m^*+1}$, then $Z_{m^*}=Z_{m^*} +Z_{m^*}\bin{G}Z_{m^*}$, and it
follows from~\eqref{b2} that $Z_{m^*}\bin{G}Z_{m^*}\leq Z_{m^*}$. 
Since $Z_{m+1}\geq Z_m$ for all $m\in\N$, it also follows that
$Z_{m}\geq Z_0=\bin{K}$ for all $m\in\N$.  Thus the two constrains
of~\eqref{eqn:closest_super_sparse} are satisfied for $Z_{m^*}$.

Since $Z^*$ is the sparsest binary matrix satisfying these
constraints, it follows that $Z^*\leq Z_{m^*}$.
Together with Lemma~\ref{lem:sparser_than_opt} and equation~\eqref{b3}, it follows that $Z_{m^*}=Z^*$.
\end{proof}

We now give the main result: that the sequence converges, that it does
so in $\log_2 n$ steps, and that it achieves the optimal solution to
our problem.

\begin{thm} 
\label{thm:closest_super_sparse_conv}
The problem specified in (\ref{eqn:closest_super_sparse}) has an unique optimal solution $Z^*$
satisfying:
\begin{equation} 
Z_{m^*} ~=~ Z^*
\end{equation} where $m^*=\lceil\log_2 n\rceil$ and where $n=\min\{n_u,n_y\}$.
\end{thm}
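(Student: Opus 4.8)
The plan is to show that the sequence $\{Z_m\}$ becomes constant exactly once $2^m$ reaches $n$, and then to invoke the already-established lemmas to identify that constant value with the unique optimizer. The two facts I would chain together are Lemma~\ref{lem:sequence_components}, which expresses each iterate as a truncated series $Z_m = \sum_{s=0}^{2^m-1}\kgkbin^s$, and Lemma~\ref{lem:num_terms}, which says that every higher-order term $\kgkbin^r$ with $r\geq n$ is already dominated by the first $n$ terms. Together these should pin down a limiting matrix $Z_\infty := \sum_{s=0}^{n-1}\kgkbin^s$ that the iterates reach and hold.

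First I would observe that $m^* = \lceil\log_2 n\rceil$ is precisely the first index with $2^{m^*}\geq n$, so that the truncation length in Lemma~\ref{lem:sequence_components} already runs past the index $n-1$. I would then split $Z_{m^*} = \sum_{s=0}^{2^{m^*}-1}\kgkbin^s$ into the head $\sum_{s=0}^{n-1}\kgkbin^s$ and the tail $\sum_{s=n}^{2^{m^*}-1}\kgkbin^s$. By Lemma~\ref{lem:num_terms} each summand of the tail is bounded above (in the binary order) by the head, and since the binary sum is an OR-type operation, a sum of terms each dominated by a common matrix is itself dominated by that matrix; hence by~\eqref{b2} the tail is absorbed and $Z_{m^*} = \sum_{s=0}^{n-1}\kgkbin^s = Z_\infty$. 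The identical argument with $m^*+1$ in place of $m^*$ (the truncation is only longer) gives $Z_{m^*+1} = Z_\infty$, so $Z_{m^*} = Z_{m^*+1}$.

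With the sequence shown to be stationary at step $m^*$, I would apply Lemma~\ref{lem:conv_is_opt} to conclude $Z_{m^*} = Z^*$, which yields both that $Z_{m^*}$ solves~\eqref{eqn:closest_super_sparse} and the displayed identity. For uniqueness I would note that $Z_{m^*}$ is feasible for~\eqref{eqn:closest_super_sparse}: its stationarity gives $Z_{m^*}\bin{G}Z_{m^*}\leq Z_{m^*}$ and the monotonicity of the sequence gives $Z_{m^*}\geq\bin{K}$. Meanwhile Lemma~\ref{lem:sparser_than_opt} — whose proof uses only feasibility — shows that any feasible $Z$ satisfies $Z\geq Z_{m^*}$, so $\NN(Z)\geq\NN(Z_{m^*})$ for every feasible $Z$, making $Z_{m^*}$ a minimizer; any other minimizer $Z$ would then satisfy both $Z\geq Z_{m^*}$ and $\NN(Z)=\NN(Z_{m^*})$, which by~\eqref{b3} forces $Z=Z_{m^*}$.

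The main obstacle, and the step I would be most careful about, is the collapse of the tail sum, because binary (saturating) addition does not behave like ordinary summation: the claim that a sum of terms each bounded by a common $W$ is itself bounded by $W$ has to be read off directly from the definitions of $+$ and $\leq$ in the preliminaries rather than assumed. Everything else is bookkeeping — matching $m^*=\lceil\log_2 n\rceil$ to the threshold $2^{m^*}\geq n$, and threading feasibility through Lemmas~\ref{lem:sparser_than_opt} and~\ref{lem:conv_is_opt}.
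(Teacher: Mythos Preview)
Your proposal is correct and follows essentially the same route as the paper: use Lemma~\ref{lem:sequence_components} and Lemma~\ref{lem:num_terms} together with $2^{m^*}\geq n$ to conclude $Z_{m^*}=Z_{m^*+1}=\sum_{s=0}^{n-1}\kgkbin^s$, then invoke Lemma~\ref{lem:conv_is_opt}. You are in fact more careful than the paper in two places---you spell out the tail-absorption step and you supply an explicit uniqueness argument (the paper asserts uniqueness but does not prove it); one small quibble is that your final uniqueness step follows not from~\eqref{b3} but from the elementary fact that $Z\geq Z_{m^*}$ together with $\NN(Z)=\NN(Z_{m^*})$ forces equality.
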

\begin{proof}
$Z_{m^*} = \sum_{s=0}^{2^{m^*}-1}\kgkbin^s$
from Lemma~\ref{lem:sequence_components}, and then 
$Z_{m^*} =  \sum_{s=0}^{n-1}\kgkbin^s$ from Lemma~\ref{lem:num_terms}
since $2^{m^*}\geq n$.  Similarly, 
$Z_{m^*+1} = \sum_{s=0}^{2^{m^*+1}-1}\kgkbin^s
=  \sum_{s=0}^{n-1}\kgkbin^s$, and thus $Z_{m^*}=Z_{m^*+1}$ and the
result follows from Lemma~\ref{lem:conv_is_opt}.
\end{proof}

\paragraph{Delay Superset (Revisited)}
Suppose again that we wish to find the closest superset defined by
delay constraints. This can be found by convex optimization as
described in Section~\ref{sec:closest_delays}.  It can also be found
with the above algorithm, where $\bin{G}$ is replaced with a matrix of
the propagation delays, $\bin{K}$ with the given transmission delays,
and where the binary algebra is replaced with the
$(\min,+)$ algebra.
The $Z_m$ matrices then hold the transmission delays at each
iteration, and with the appropriate inequalities flipped (since
finding a superset means decreasing transmission delays), the proofs
of the lemmas and the convergence theorem follow almost identically.

\subsubsection{Sparsity Subset}
We now notice an interesting asymmetry.
For the case of delay constraints, if we were interested in finding
the most restrictive superset (for a lower bound), or the least
restrictive subset (for an upper bound), we simply flipped the sign of
our final constraint, and the problem was convex either way.  When we
instead consider sparsity constraints, the binary constraint ruins the
convexity, but we see that in the former (superset) case we can still
find the closest constraint in a fixed number of iterations in
polynomial time; however, for the latter (subset) case, there is no
clear way to ``flip'' the algorithm.

This can be understood as follows.  If there exist indeces $i,j,k,l$
such that $\bin{K}_{ki}=\bin{G}_{ij}=\bin{K}_{jl}=1$, but
  $\bin{K}_{kl}=0$; that is, indeces for which
  condition~\eqref{eqn:kgkkbin} fails, then the above algorithm resets
  $\bin{K}_{kl}=1$.
In other words, if there is an indirect connection from
$y_l\rightarrow u_k$, but not a direct connection, it hooks up the
direct connection.

But now consider what happens if we try to develop an algorithm that
goes in the other direction, that finds the least sparse constraint
set which is more sparse than the original.  If we again have indeces
for which condition~\eqref{eqn:kgkkbin} fails, then we need to disconnect
the indirect connection, but it's not clear if we should set
$\bin{K}_{ki}$ or $\bin{K}_{jl}$ to zero, since we could do either.
The goal is, in principle, to disconnect the link that will ultimately
lead to having to make the fewest subsequent disconnections, so that
we end up with the closest possible constraint set to the original.

We suggest some methods for dealing with this problem.  It is likely
that they can be greatly improved upon, but are meant as a first cut
at a reasonable polynomial time algorithm to find a close sparse
subset.

For the first heuristic, we set up transmission delays and propagation
delays as in~\eqref{eqn:t_to_bin} and~\eqref{eqn:p_to_bin}, and then
instead of adding the binary constraint and making the problem
non-convex, add the relaxed constraint $0\leq\tvar_{kl}\leq\bigdelay$ for all
$k,l$, and solve the resulting convex problem.
Then, for a set of indeces violating condition~\eqref{eqn:kgkkbin},
set $\bin{K}_{ki}$ to zero if $\tvar^*_{ki}\geq\tvar^*_{jl}$, and set
$\bin{K}_{jl}$ to zero otherwise, before re-solving the convex
problem. 
The motivation is that we
disconnect the one that has a larger delay, that is, which is more
constrained, in the case where we allowed varying degrees of constraint.

The relaxed problem could instead be solved with increasing penalties on
the entropy of $\vecc(t/\bigdelay)$, to approach a binary solution,
as in the study of probability collectives~\cite{wolpert_pc_acs}.
 This method
has the benefit that it could be used to find a close sparse set
or subset.

For the second heuristic, we more directly keep track of how many
indirect connections are associated with a direct connection.  Define
this weight as 
$w_{kl}=\sum_{i=1}^{n_y}\sum_{j=1}^{n_u}\bin{K}_{ki}\bin{G}_{ij}\bin{K}_{jl}$
thus giving the amount of 3-hop connections from $y_l\rightarrow
u_k$.  This is a crude measure of how many subsequent disconnections
we will have to make to obtain quadratic invariance if we were to
disconnect a direct path from $y_l\rightarrow u_k$.  Then, given
indeces for which condition~\eqref{eqn:kgkkbin} is violated, we set
$\bin{K}_{ki}$ to zero if $w_{ki}\leq w_{jl}$, and set $\bin{K}_{jl}$
to zero otherwise.

Note that for either heuristic, we have many options for how often to reset the guiding variables,
that is, to re-solve the convex program or recalculate the weights,
such as after each disconnection, or after each pass through all
$n_un_y$ indeces.

It has been noticed that some of the quadratically invariant constraints for
certain classes of problems, including sparsity, may be thought of as partially ordered
sets~\cite{shah2008poset}. This raises the possibililty that work in
that area, such as~\cite{cardinal2009poset}, may be leveraged to more
efficiently find the closest sparse sets or subsets.

\section{Nonlinear Time-Varying Control}
\label{sec:nltv}


It was shown in~\cite{rotkowitz_acc06} that if we consider the design
of possibly nonlinear, possibly time-varying (but still causal)
controllers to stabilize
possibly nonlinear, possibly time-varying (but still causal) plants,
then while the quadratic invariance results no longer hold,
the following condition
  \[
   K_1(I\pm GK_2)\in S \qquad \text{for all } K_1,K_2\in S
  \]
similarly allows for a convex parameterization of all stabilizing
controllers subject to the given constraint.

This condition is equivalent to quadratic invariance when $S$ is
defined by delay constraints or by sparsity constraints, and so the
algorithms in this paper may also be used to find the closest
constraint for which this is achieved.

\section{Numerical Examples}
\label{sec:numex}
We present some numerical examples of the algorithms developed in this
paper.

\subsection{Example - Sparsity Constraints}
\label{sec:numex_sparsity}
We start this section by finding the closest quadratically invariant superset, with respect to
the following sparsity patterns of two plants with
four subsystems each ($n=4$):
\begin{equation}
\label{eqn:numex_sparse_plants}
\bin{G}_{I} = \footnotesize{\left[ \begin{array}{cccc} 1 & 0 & 0 & 0 \\ 1 & 1 & 0 & 0 \\ 0 & 1 & 1 & 1 \\ 0 & 0 & 0 & 1 \end{array}
\right]}
\hskip 10pt
\bin{G}_{II} = \footnotesize{\left[ \begin{array}{cccc} 1 & 0 & 0 & 0 \\ 1 & 1 & 0 & 0 \\ 0 & 1 & 1 & 0 \\ 0 & 0 & 1 & 1 \end{array}
\right]}
\end{equation}

The first sparsity pattern in~\eqref{eqn:numex_sparse_plants}
represents a plant where the first two control inputs effect not only
their own subsystems, but also the subsequent subsystems, and where
the last control input effects not only its own subsystem, but also
the preceding subsystem.  The second sparsity pattern represents a
plant where each control input effects its own subsystem and the
subsequent subsystem, which also corresponds to the
open daisy-chain configuration.
 Now consider an initial proscribed controller configuration where the
 controller for each subsystem has access only to the measurement from
 its own subsystem:
\begin{equation}
\bin{K} = \footnotesize{\left[ \begin{array}{cccc} 1 & 0 & 0 & 0 \\ 0 & 1 & 0 & 0 \\ 0 & 0 & 1 & 0 \\ 0 & 0 & 0 & 1 \end{array}
\right]}
\end{equation}
that is, where the controller is block diagonal.
Using the algorithm specified in
(\ref{eqn:sequence_init})-(\ref{eqn:sequence_step}) 
we arrive at:
\begin{equation}
\label{eqn:numex_sparse_supersets}
Z^*_{I} = \footnotesize{\left[ \begin{array}{cccc} 1 & 0 & 0 & 0 \\ 1 & 1 & 0 & 0 \\ 1 & 1 & 1 & 1 \\ 0 & 0 & 0 & 1 \end{array}
\right]}
\hskip 30pt
Z^*_{II} = \footnotesize{\left[ \begin{array}{cccc} 1 & 0 & 0 & 0 \\ 1 & 1 & 0 & 0 \\ 1 & 1 & 1 & 0 \\ 1 & 1 & 1 & 1 \end{array}
\right]}
\end{equation}
where $Z^*_{I}$ and $Z^*_{II}$ denote the optimal solution of (\ref{eqn:closest_super_sparse})
as applied to $\bin{G}_{I}$ and $\bin{G}_{II}$, respectively, and thus
represent the sparsity constraints of the closest quadratically
invariant supersets of the set of block diagonal controllers. 
We see that a quadratically invariant set of controllers for the first
plant (which contains all block diagonal controllers) has to have the
same sparsity pattern as the plant, and an additional link from the
first measurement to the third controller.
We then see that any quadratically invariant set for the open
daisy-chain configuration which contains the diagonal will have to be
lower triangular.

\subsection{Example - Delay Constraints}
\label{sec:numex_delays}
We consider $n=4$ subsystems, with the following given propagation
delays and the following proscribed transmission delays, all chosen as
random uniform integers from 0 to 9:
\[
\porig~=~
\footnotesize{\left[\begin{array}{cccc} 9 & 0 & 8 & 4\\ 0 & 7 & 8 & 7\\ 3 & 5 & 7 & 1\\ 5 & 5 & 3 & 1 \end{array}\right]}
\hskip 30pt
\torig~=~
\left[\begin{array}{cccc} 2 & 3 & 6 & 5\\ 5 & 2 & 2 & 9\\ 9 & 8 & 0 &  0\\ 7 & 9 & 8 & 5 \end{array}\right]
\]
We then display in Table~\ref{delay_table} the difference between the delays of the closest
quadratically invariant subset, set, and superset from the given
transmission delays $(t-\torig)$, as measured in the vector \mbox{1-norm}, \mbox{2-norm},
and \mbox{$\infty$-norm}.  
These were computed by solving the convex problem of
Section~\ref{sec:closest_delays}, which was done in Matlab using the
\texttt{CVX} package~\cite{cvx_toolbox}, and verified using \texttt{linprog()}
for the 1-norm and $\infty$-norm.
\begin{table*}[htb]
\begin{center}
\begin{tabular}{c||c|c|c}
&Closest Subset & Closest Set & Closest Superset
\\ \hline\hline
$\norm{\cdot}_1$
&
\footnotesize{$\begin{array}{cccc} 1.74 & 0 & 0 & 0.57\\ 0 & 1.43 & 0.68 & 0\\ 0 & 0 & 2.26 & 0.73\\ 0.27 & 0 & 0 & 0.32 \end{array} $}
&
\footnotesize{$\begin{array}{cccc} 1.77 & 0.08 & -0.06 & 0.13\\ 0 & 0.87 & 0.17 & -1.0\\ -0.65 & -0.1 & 1.81 & 0.29\\ 0.06 & 0 & 0 & 0 \end{array} $}
&
\footnotesize{$\begin{array}{cccc} 0 & 0 & -2.0 & 0\\ -1.0 & 0 & 0 & -2.0\\ -4.0 & -2.0 & 0 & 0\\ 0 & 0 & 0 & 0 \end{array} $}
\\ \hline
$\norm{\cdot}_2$
&
\footnotesize{$\begin{array}{cccc} 2.0 & 0 & 0 & 1.0\\ 0 & 1.0 & 0.5 & 0\\ 0 & 0 & 2.0 & 0.5\\ 0.5 & 0 & 0 & 0.5 \end{array} $}
&
\footnotesize{$\begin{array}{cccc} 1.43 & 0.27 & -0.27 & 0.65\\ 0 & 0.68 & 0.3 & -0.67\\ -1.15 & -0.3 & 1.42 & 0.02\\ 0 & 0 & 0 & 0.03 \end{array} $}
&
\footnotesize{$\begin{array}{cccc} 0 & 0 & -2.0 & 0\\ -1.0 & 0 & 0 & -2.0\\ -4.0 & -2.0 & 0 & 0\\ 0 & 0 & 0 & 0 \end{array} $}
\\ \hline
$\norm{\cdot}_\infty$
&
\footnotesize{$\begin{array}{cccc} 2.0 & 1.25 & 0.56 & 1.29\\ 0.81 & 1.57 & 1.45 & 0.31\\ 0 & 0.36 & 2.0 & 1.48\\ 1.15 & 0.66 & 0.66 & 1.24 \end{array} $}
&
\footnotesize{$\begin{array}{cccc} 1.33 & 0.64 & -0.55 & 0.6\\ -0.05 & 1.07 & 0.9 & -0.89\\ -1.33 & -0.78 & 1.33 & 0.99\\ 0.44 & -0.27 & -0.29 & 0.66 \end{array} $}
&
\footnotesize{$\begin{array}{cccc} 0 & -0.65 & -2.92 & -0.69\\ -2.12 & -0.21 & -0.32 & -3.48\\ -4.0 & -3.44 & 0 & 0\\ -1.07 & -2.6 & -2.66 & -0.59 \end{array}$}
\end{tabular}
\caption{Distance to Closest Quadratically Invariant Delay Constraints}
\label{delay_table}
\end{center}
\end{table*}

The delays have to be increased to reach the closest QI subsets, so
the first column contains only nonnegative numbers, and the delays are
decreased to get to the closest supersets, so the last column contains
only nonpositive numbers, and the delays may be moved in either
direction to get to the closest QI set in the middle column.  Finding
the closest QI superset is actually the same in any norm, as each
delay between a given measurement and control action is set to the
fastest indirect delay between those two signals. We indeed see that
the superset is the same for the 1-norm and 2-norm.  This same set of
delays would also solve the problem for the $\infty$-norm, but it has
selected a matrix with some smaller delays, with the same maximum
change of 4.  This shows the problem with the lack of uniquness that
often arises when optimizing the $\infty$-norm: it is indifferent to
further moving the delays that have not been moved the maximum
amount. Thus it should never be used to find the closest superset, and
when it is appropriate to control it in finding a closest set or
subset, it should be used in conjunction with another norm as well.
We see that this example produces the same level of sparsity in the
delay differences to closest subset and set for the 1-norm and 2-norm,
though minimizing the 1-norm generally produces sparse
solutions~\cite{donoho2006most}, and should be the norm to choose here when one
wishes to alter as few of the delays as necessary.

\section*{Acknowledgment}
The authors would like to thank Randy Cogill for useful discussions
related to the delay constraints. 


\section{Conclusions}
\label{sec:conc}

The overarching goal of this paper is the design of
linear time-invariant,
decentralized controllers for plants comprising
dynamically coupled subsystems.
Given pre-selected constraints on the controller which capture the
decentralization being imposed, we addressed the question of finding
the closest constraint which is quadratically invariant under the
plant.
Problems subject to such constraints are amenable to convex synthesis,
so this is important for bounding the optimal solution to the original
problem.

We focused on two particular classes of this problem. The first is where the
  decentralization imposed on the controller is specified by delay
  constraints; that is, information is passed between subsystems with
  some given delays, represented by a matrix of transmission delays.
The second is where the decentralization imposed on the controller
  is specified by sparsity constraints; that is, each controller can
  access information from some subsystems but not others, and this is
  represented by a binary matrix.

For the delay constraints, we showed that finding the closest
quadratically invariant constraint can be set up as a convex
optimization problem.  We further showed that finding the closest
superset; that is, the closest set that is less restrictive than the
pre-selected one, to get lower bounds on the original problem, is also
a convex problem, as is finding the closest subset.

For the sparsity constraints, the convexity is lost, but we provided
an algorithm which is guaranteed to give the closest quadratically
invariant superset in at most $\log_2 n$ iterations, where $n$ is the
number of subsystems.
 We also discussed methods to give close
quadratically invariant subsets.

\bibliographystyle{IEEEtran}

\bibliography{IEEEabrv,mikenunotn}


\end{document}